\newcommand{\Z}{\mathbb Z}
\newcommand{\N}{\mathbb N}
\newcommand{\F}{\mathbb F}
\DeclareMathOperator{\im}{im}
\DeclareMathOperator{\Spec}{Spec}
\DeclareMathOperator{\Sat}{Sat}
\DeclareMathOperator{\Ann}{Ann}
\newcommand{\W}{\mathcal{W}}
\newcommand{\cris}{\mathrm{cris}}
\newcommand{\trdeg}{\mathrm{tr deg}}
\newcommand{\colim@}[2]{%
  \vtop{\m@th\ialign{##\cr
    \hfil$#1\operator@font colim$\hfil\cr
    \noalign{\nointerlineskip\kern1.5\ex@}#2\cr
    \noalign{\nointerlineskip\kern-\ex@}\cr}}
}
\newcommand{\colim}{%
  \mathop{\mathpalette\colim@{\rightarrowfill@\scriptscriptstyle}}\nmlimits@
}
\newcommand{\lim@}[2]{%
  \vtop{\m@th\ialign{##\cr
    \hfil$#1\operator@font lim$\hfil\cr
    \noalign{\nointerlineskip\kern1.5\ex@}#2\cr
    \noalign{\nointerlineskip\kern-\ex@}\cr}}
}
\newcommand{\limarrow}{%
  \mathop{\mathpalette\lim@{\leftarrowfill@\scriptscriptstyle}}\nmlimits@
}
\renewcommand{\varprojlim}{%
  \mathop{\mathpalette\varlim@{\leftarrowfill@\scriptscriptstyle}}\nmlimits@
}
\renewcommand{\varinjlim}{%
  \mathop{\mathpalette\varlim@{\rightarrowfill@\scriptscriptstyle}}\nmlimits@
}
\newcounter{ctr}[section]
\numberwithin{ctr}{section}
\numberwithin{equation}{ctr}
\newcounter{ltr}
\theoremstyle{plain}
\newtheorem{theorem}[ctr]{Theorem}
\newtheorem{proposition}[ctr]{Proposition}
\newtheorem{lemma}[ctr]{Lemma}
\newtheorem{corollary}[ctr]{Corollary}
\newtheorem{intro_theorem}[ltr]{Theorem}
\newtheorem{intro_corollary}[ltr]{Corollary}
\theoremstyle{definition}
\newtheorem{example}[ctr]{Example}
\newtheorem{remark}[ctr]{Remark}
\newtheorem{para}[ctr]{}
\newtheorem{question}[ctr]{Question}
\title{Dimensional vanishing of the saturated de Rham--Witt complex}
\author{Ravi Fernando -- \texttt{ravif@illinois.edu}}
\date{}
\begin{document}
\maketitle

\begin{abstract}
The saturated de Rham--Witt complex, introduced by Bhatt--Lurie--Mathew in \cite{BLM}, is a variant of the classical de Rham--Witt complex which is expected to behave better for singular schemes.
We provide partial justification for this expectation by showing that the saturated de Rham--Witt complex satisfies a dimensional vanishing property even in the presence of singularities.  This is stronger than the vanishing properties of the classical de Rham--Witt complex, crystalline cohomology, or de Rham cohomology, and is instead comparable to \'etale cohomology.
\end{abstract}

\section{Introduction}

\begin{para}
Let $k$ a perfect field of characteristic $p \neq 0$, $W = W(k)$ its ring of Witt vectors, and $X$ a scheme over $k$.  Then we can associate to $X/k$ two closely related commutative differential graded algebras over the Witt vector sheaf $W \mathcal O_X$:  the \emph{classical de Rham--Witt complex}
$$
W \Omega^*_X = \lim_{\leftarrow r} W_r \Omega^*_X
$$
of \cite{illusie}, and the \emph{saturated de Rham--Witt complex}
$$
\W\Omega^*_X = \lim_{\leftarrow r} \W_r \Omega^*_X
$$
of \cite{BLM}.  If $X/k$ is smooth, then we have a canonical isomorphism $W \Omega^*_X \overset{\sim}{\to} \W \Omega^*_X$, and both de Rham--Witt complexes are representatives of the derived Zariski sheaf $Ru_*(\mathcal O_{X/W, \cris})$ which computes the crystalline cohomology of $X/W$.  Moreover, each of these statements comes from an analogous comparison between the $p$-adic pro-objects $(W_r \Omega^*_X)_{r>0}$, $(\W_r \Omega^*_X)_{r>0}$, and $(Ru_*(\mathcal O_{X/W_r(k), \cris}))_{r>0}$.
\end{para}

\begin{para}
However, without the assumption of smoothness, the objects $Ru_*(\mathcal O_{X/W, \cris})$, $W \Omega^*_X$, and $\W\Omega^*_X$ can in general be pairwise distinct, even up to quasi-isomorphism.
Since the appearance of \cite{BLM}, there has generally been an expectation that the saturated de Rham--Witt complex should produce ``better'' results than the classical one for singular schemes $X/k$.
\end{para}

\begin{para}
However, the precise results in this direction have been somewhat limited.
Accordingly, let us briefly recall some of the pleasant properties of $\W\Omega^*_X$ which are not necessarily shared by $W\Omega^*_X$ when $X/k$ is singular.
(We refer the reader to \cite{illusie_talks} for a more comprehensive summary of the literature.)
First, the saturated de Rham--Witt complex is definitionally always $p$-torsionfree, and \cite{BLM} proves that it is invariant under passing to the reduction of $X$, or even the seminormalization.  None of these properties hold for the classical de Rham--Witt complex, as they are all inconsistent with the identity $W\Omega^0_X = W\mathcal O_X$.
Second, Ogus (\cite{ogus}) has undertaken an extensive study of $\W\Omega^*_X$ for varieties $X/k$ with toroidal singularities, proving among other things that for any reduced, smooth idealized fs log scheme $X/k$, $\W_1 \Omega^*_X$ recovers a previously studied subcomplex of the log de Rham complex of $X$.
\end{para}

\begin{para}
The goal of this paper is to document a general sense in which the saturated de Rham--Witt complex behaves ``better'' than either the classical de Rham--Witt complex or crystalline cohomology:  namely, it satisfies dimensional vanishing even for singular varieties.
Previously, the strongest known result in this direction was \cite[Lemma 9.3.7(3)]{BLM}:
\end{para}

\begin{lemma} \label{lem:BLM_dimensional_vanishing}
(Bhatt--Lurie--Mathew)
Suppose that $R/k$ is a commutative algebra such that $\Omega^1_{R/k}$ is locally generated by $\leq d$ elements.  Then $\W\Omega^*_R$ is concentrated in degrees $[0,d]$.
\end{lemma}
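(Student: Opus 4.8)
The plan is to prove the sharper assertion that $\W\Omega^n_R = 0$ for every $n > d$; since $\W\Omega^*_R$ is by construction a cochain complex supported in non-negative degrees, this is exactly the claim of concentration in $[0,d]$. As the vanishing of a module is Zariski-local and the formation of $\W\Omega$ commutes with localization, I may assume that $\Omega^1_{R/k}$ is \emph{globally} generated by $d$ elements $\omega_1, \dots, \omega_d$. The elementary starting point is that the ordinary de Rham complex already vanishes in the target range: $\Omega^n_{R/k} = \wedge^n_R \Omega^1_{R/k}$ is generated by the wedges $\omega_{i_1} \wedge \cdots \wedge \omega_{i_n}$, which must repeat an index once $n > d$, so $\Omega^n_{R/k} = 0$ for all $n > d$. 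The real content is to transport this vanishing across the saturation and completion that define $\W\Omega^*_R$.

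I would carry this out in two reductions. The first passes from the whole complex $\W\Omega^*_R = \lim_{\leftarrow r} \W_r\Omega^*_R$ to its first level $\W_1\Omega^*_R$, using only the formal theory of (strict) Dieudonné complexes. For a saturated Dieudonné complex the $V$-filtration produces, in each cohomological degree $n$, a short exact sequence $0 \to \W_1\Omega^n_R \xrightarrow{V^{r-1}} \W_r\Omega^n_R \to \W_{r-1}\Omega^n_R \to 0$. Granting $\W_1\Omega^n_R = 0$, an induction on $r$ through these sequences gives $\W_r\Omega^n_R = 0$ for all $r$, whence $\W\Omega^n_R = \lim_{\leftarrow r} \W_r\Omega^n_R = 0$ because $\W\Omega^*_R$ is strict, i.e.\ is identified with this inverse limit. (It is worth noting that this is precisely the step that fails for the classical complex, which is not saturated, and which explains why the saturated theory enjoys a strictly stronger vanishing.) Thus it suffices to prove $\W_1\Omega^n_R = 0$ for $n > d$.

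The second reduction compares $\W_1\Omega^*_R$ with the de Rham complex. Here I would use that $\W_1\Omega^*_R$ is a graded-commutative differential graded $\F$-algebra generated in degrees $\le 1$: its degree-$n$ term is spanned by products over $\W_1\Omega^0_R$ of $n$ elements of $\W_1\Omega^1_R$, so the canonical map from the de Rham complex of $\W_1\Omega^0_R$ is surjective in each degree $\ge 1$. Consequently, if $\W_1\Omega^1_R$ is locally generated by at most $d$ elements, the same exterior-power count as above forces $\W_1\Omega^n_R = 0$ for $n > d$, completing the argument. The number of local generators of $\W_1\Omega^1_R$ is in turn controlled through the natural map $\Omega^1_{R/k} \to \W_1\Omega^1_R$ together with the $d$ generators fixed above.

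I expect this last generator bound to be the main obstacle, for two intertwined reasons. First, the passage to $\W\Omega$ involves saturation, which can enlarge a complex inside its rationalization; I must verify that saturation creates no genuinely new algebra generators in degree $1$, so that the bound of $d$ generators is inherited by $\W_1\Omega^1_R$. Degree considerations help here—saturation and completion are computed degreewise (as a colimit inside $M[1/p]$ and as an inverse limit of quotients of $M^n$, respectively), so neither operation raises the top nonvanishing degree of the underlying complex—but one still owes an argument that they do not inflate the rank in degree $1$. Second, the comparison map is not an isomorphism in degree $0$: by the reduction- and seminormalization-invariance of \cite{BLM}, the term $\W_1\Omega^0_R$ is the seminormalization of $R_\red$ rather than $R$ itself, so the $d$ generators of $\Omega^1_{R/k}$ cannot be pulled back to $\W_1\Omega^0_R$ directly. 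The crux is therefore to show that this passage to the seminormalization does not increase the local number of generators of the module of differentials beyond $d$; I would establish this by combining the explicit structural description of $\W_1\Omega^*_R$ from \cite{BLM} with the local analysis of seminormalization over the perfect base field $k$, after which the vanishing for $\W\Omega^*_R$ follows formally from the two reductions.
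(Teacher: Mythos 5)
Your proposal has the right skeleton (reduce to level $1$, then kill level $1$), but it stops short of a proof precisely at the crux. The genuine gap is your second reduction, which rests on two unproved structural claims: (i) that $\W_1\Omega^*_R$ is generated as a differential graded algebra in degrees $\leq 1$, i.e.\ that $\Omega^*_{\W_1\Omega^0_R} \to \W_1\Omega^*_R$ is surjective in each positive degree, and (ii) that $\W_1\Omega^1_R$ is locally generated by $\leq d$ elements. Neither is a formal consequence of anything you cite. Saturation adjoins, degree by degree, elements $\eta$ with $F^N\eta$ in the image of $\Omega^*_{W(R)}$ (\cite[Remark 2.3.3]{BLM}); since $F$-divisibility of a product gives no $F$-divisibility of its factors, there is no mechanism to rewrite such an $\eta$ in degree $n$ as a sum of products of degree-$\leq 1$ saturated elements, so (i) is not a soft fact but essentially the whole difficulty — and you explicitly defer (ii) (``I would establish this by combining\dots''). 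Note also that the paper does not reprove this lemma: it quotes \cite[Lemma 9.3.7(3)]{BLM}, whose proof runs through the derived de Rham--Witt complex, so the burden your second step carries is the entire content of that argument. There is additionally a concrete error in your first reduction: the sequence $0 \to \W_1 M^n \xrightarrow{V^{r-1}} \W_r M^n \to \W_{r-1}M^n \to 0$ is not exact, because the kernel of $\W_r M^n \to \W_{r-1}M^n$ is $(\im V^{r-1} + \im dV^{r-1})/(\im V^r + \im dV^r)$, which involves $dV^{r-1}$ of degree-$(n-1)$ elements and not only $V^{r-1}$ of degree-$n$ ones (compare the standard filtration in Illusie); at $n = d+1$ those degree-$d$ contributions do not vanish at level $1$, so your induction breaks there. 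The degreewise implication $\W_1 M^n = 0 \Rightarrow \W M^n = 0$ is nonetheless true, but the correct proof is Proposition \ref{prop:W1_suffices}, via the identification $\W_r M^n \simeq H^n(M^*/p^rM^*)$ of \cite[Proposition 2.7.1]{BLM} and induction along the $p$-adic long exact sequences.

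The irony is that your opening observation already suffices, if you transport it correctly. For $n > d$ one has, locally on $\Spec R$, $\Omega^n_{R} = \wedge^n \Omega^1_{R} = 0$, so the natural map $\Omega^n_R \to \W_1\Omega^n_R$ vanishes identically; since the construction commutes with localization (\cite[Remark 5.2.3]{BLM}) and vanishing of a module is local, Lemma \ref{lem:vanishing_map_suffices} then gives $\W\Omega^n_R = 0$ outright. The proof of that lemma is exactly the tool your proposal is missing: given $\eta \in \Sat(\Omega^*_{W(R)})^n$, some $F^N\eta$ lies in the image of $\Omega^n_{W(R)}$, hence maps to $0$ in $\W_1\Omega^n_R$ because the map factors through $\Omega^n_R$, and Lemma \ref{lem:F_inj_ill-defined} (if $Fx \in \im V + \im dV$ then $x \in \im V + \im dV$) propagates the vanishing back to $\eta$ itself. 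In other words, rather than trying to bound generators of $\W_1\Omega^1_R$ or prove multiplicative generation of $\W_1\Omega^*_R$ — claims about the seminormalization that you would still owe — you should push the vanishing of $\Omega^n_R$ itself across the saturation using $F$-divisibility and the injectivity-type Lemma \ref{lem:F_inj_ill-defined}. With that substitution, your two-step plan becomes a complete and elementary proof of the Bhatt--Lurie--Mathew lemma, avoiding the derived machinery of their original argument.
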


\begin{remark}
If $R/k$ is of finite type, then the optimal choice of $d$ above is the local embedding dimension of $R/k$; i.e. the largest dimension among its tangent spaces (cf. \cite[tag \texttt{0C2G}]{stacks}).
\end{remark}

\begin{remark}
Again assuming $R/k$ is of finite type, it is not difficult to prove the analogue of Lemma \ref{lem:BLM_dimensional_vanishing} for the classical de Rham--Witt complex.  Indeed, if $\Omega^1_{R/k}$ is locally generated by $\leq d$ elements, then Nakayama's lemma\footnote{Our use of Nakayama's lemma requires us to have prior knowledge that $\Omega^1_{W_n(R)/W_n(k)}$ is a finitely generated $W_n(R)$-module.  It is not clear whether this is a formal consequence of the finite generation of $\Omega^1_{R/k}$.  But when $R/k$ is of finite type, the same is true of $W_n(R)/W_n(k)$, so $\Omega^1_{W_n(R)/W_n(k)}$ is finitely generated.} implies that these lift to local generators of $\Omega^1_{W_n(R)/W_n(k)}$ for any $n$.  Thus $\Omega^*_{W_n(R)/W_n(k)}$ is concentrated in degrees $[0,d]$, which implies that its quotient $W_n \Omega^*_R$ is as well, and thus also the limit $W\Omega^*_R$.
\end{remark}

\begin{para}
We will show the following surprising improvement of Lemma \ref{lem:BLM_dimensional_vanishing}, which does \emph{not} hold for the classical de Rham--Witt complex.  We note that while Bhatt--Lurie--Mathew's proof uses the machinery of the derived de Rham--Witt complex, ours is quite elementary:  we first prove by hands-on calculations that $\W_1 \Omega^n_{k[x_1, \dots, x_n]/I}$ vanishes for any $n \geq 0$ and any ideal $I \neq 0$ (which is enough to prove our claim in the case of a hypersurface), and we then reduce to this case by a transcendence degree argument.
\end{para}

\begin{intro_theorem} \label{thm:dimensional_vanishing_intro}
(See Corollary \ref{cor:dimensional_vanishing_finite-type}.)
If $R$ is a finite-type algebra over a perfect field $k$, then $\W\Omega^*_R$ is concentrated in degrees $[0, \dim R]$.
\end{intro_theorem}

\begin{remark}
This bound is sharp, since (after replacing $R$ by its reduction) we have $\W\Omega^* = W\Omega^*$ over the smooth locus.  Note that $\dim R$ is always less than or equal to the local embedding dimension of $R/k$, and it is strictly less when $R$ is equidimensional and non-smooth.
\end{remark}

Theorem \ref{thm:dimensional_vanishing_intro} is a consequence of the following more general statement:

\begin{intro_theorem} \label{thm:noetherian_dimensional_vanishing_intro}
(See Theorem \ref{thm:noetherian_dimensional_vanishing}.)
Suppose $R$ is a Noetherian $k$-algebra.  The complex $\W\Omega^*_R$ is concentrated in degrees $[0, \sup_{\eta}(\trdeg(\kappa(\eta)/k))]$, where $\eta$ runs over the generic points of $\Spec R$.
\end{intro_theorem}

Globalizing and passing to hypercohomology, Theorem \ref{thm:dimensional_vanishing_intro} yields the corollary:

\begin{intro_corollary} \label{cor:dimensional_vanishing_coh_intro}
(See Corollary \ref{cor:dimensional_vanishing_coh}.)
For any finite-type scheme $X/k$, the hypercohomology $\mathbb H^*(X, \W\Omega^*_X)$ is concentrated in degrees $[0, 2 \dim X]$.  If $X/k$ is moreover affine, then $\mathbb H^*(X, \W\Omega^*_X)$ is concentrated in degrees $[0, \dim X]$.  The same bounds hold for the hypercohomology of the quotient complex $\W_r \Omega^*_X$ for any $r > 0$.
\end{intro_corollary}

\begin{remark}
Corollary \ref{cor:dimensional_vanishing_coh_intro} is directly analogous to the behavior of \'etale cohomology for (possibly singular) varieties over a separably closed field; see \cite[Theorems VI.1.1 and VI.7.2]{milne_book}.
Note that it fails for both the classical de Rham--Witt complex and the crystalline cohomology of a cusp, by \cite[\textsection \textsection 6.3-6.4]{BLM}.
In fact, the analogue of Theorem \ref{thm:dimensional_vanishing_intro} for the de Rham complex is \emph{always} false for equidimensional singular varieties, regardless of the base field.   (It follows that the same is true of the classical de Rham--Witt complex, of which de Rham is a quotient.)  Indeed, if $x \in X$ is a point with tangent dimension exceeding $n = \dim X$ and we choose functions $f_1, \dots, f_{n+1} \in \mathfrak m_x$ which are linearly independent modulo $\mathfrak m_x^2$, then the section $df_1 \wedge \cdots \wedge df_{n+1} \in \Omega^{n+1}_X$ does not vanish at $x$.
\end{remark}

\begin{remark}
The construction $R \mapsto \W_r \Omega^*_R$ commutes with Zariski localizations (\cite[Remark 5.2.3]{BLM}) and filtered colimits of rings (\cite[Corollary 4.3.5]{BLM}), thus also with stalk-localization.
In particular, since $\W_r \Omega^n_R$ vanishes if and only if all of its stalks vanish, Theorem \ref{thm:dimensional_vanishing_intro} is equivalent to the analogous statement for local $k$-algebras which are essentially of finite type (i.e. local rings of finite-type $k$-algebras).  But in fact Theorem \ref{thm:noetherian_dimensional_vanishing_intro} applies directly to both finite-type $k$-algebras and their localizations, so there is no need to pass through the world of local rings.
\end{remark}

\begin{example}
Recall from \cite[Theorem 4.3.1]{BLM} that for every regular Noetherian $k$-algebra $R$, the natural map $\Omega^*_{R/k} \to \W_1 \Omega^*_R$ is an isomorphism.
In particular, if $R$ is the imperfect field $k(t_1, \dots, t_n)$ for some $n > 0$, it follows that
$$\W_1 \Omega^i_R = \Omega^i_{R/k} = \Omega^i_{k[t_1, \dots, t_n]} \otimes_{k[t_1, \dots, t_n]} k(t_1, \dots, t_n),$$
which is nonzero for each $i = 0, \dots, n$.
This shows that Theorem \ref{thm:dimensional_vanishing_intro} does not hold without the finite-type hypothesis.  In particular, the dimensional vanishing of $\W\Omega^*_R$ is not an intrinsic property of Krull dimension, but rather a statement about transcendence degree over a perfect base field.
\end{example}

\begin{para}
\textbf{Acknowledgements.}  I am grateful to Chris Dodd for helpful discussions about this work.
\end{para}

\section{Preliminaries on Dieudonn\'e complexes}

We begin with a few elementary observations about saturated Dieudonn\'e complexes.

\begin{lemma} \label{lem:F_inj_ill-defined}
Suppose $M^*$ is a Dieudonn\'e complex, and fix integers $r > 0$ and $n$.  If $x \in M^n$ is such that $Fx$ maps to $0$ in $\W_r \Sat(M^*)$, then $x$ does as well.
\end{lemma}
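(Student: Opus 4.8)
The plan is to exploit two structural features of a saturated Dieudonn\'e complex: the injectivity of $F$, and the characterization of its image coming from saturation, namely $\im(F) = \{m : dm \in pM\}$. I will also use the standard identities $FV = VF = p$, $F\,dV = d$, and $Vd = p\,dV$; the last of these yields $V^r d = p^r\,dV^r$ by iteration, and the first yields the elementary cancellation principle that $Vm = pc$ forces $m = Fc$ (apply $F$ and use $p$-torsion-freeness). Writing the hypothesis as $Fx = V^r a + dV^r b$ with $a \in M^n$ and $b \in M^{n-1}$, the goal is to exhibit an explicit element of $\im(V^r) + \im(dV^r)$ whose image under $F$ equals $Fx$; injectivity of $F$ then finishes the argument.

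First I apply $d$ to the hypothesis. Since $d^2 = 0$, the $dV^r b$ term drops out, leaving $d(Fx) = d(V^r a)$. Rewriting the left side via $dF = pFd$ and the right side via $V^r d = p^r\,dV^r$ turns this into $p\,F(dx) = p^{-r}V^r(da)$, that is, $V^r(da) = p^{r+1}F(dx)$. The crucial consequence is that $V^r(da)$ is divisible by $p^{r+1}$. I then strip off the $V^r$ by iterating the cancellation principle above: from $V^r(da) = p^r\bigl(pF(dx)\bigr)$ one peels one factor of $V$ against one factor of $p$ at each of the $r$ stages, arriving at $da = p\,F^{r+1}(dx) \in pM^{n+1}$.

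The decisive step is to invoke saturation: from $da \in pM^{n+1}$ we conclude $a \in \im(F)$, say $a = Fa_0$ with $a_0 \in M^n$. Feeding this back in, $V^r a = V^r F a_0 = p\,V^{r-1}a_0 = F(V^r a_0)$, while $dV^r b = F(dV^{r+1}b)$ because $F\,dV^{r+1} = dV^r$. Hence $Fx = F\bigl(V^r a_0 + dV^{r+1}b\bigr)$, and injectivity of $F$ gives $x = V^r a_0 + dV^{r+1}b$, which lies in $\im(V^r) + \im(dV^r)$ as required. I expect the main obstacle to be precisely the stripping step: one must produce exactly enough $p$-divisibility (the factor $p^{r+1}$, one power for each of the $r$ copies of $V$ with one to spare) so that saturation applies to $a$ itself rather than only to $V^r a$. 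Once $a$ is replaced by $Fa_0$, everything reduces to formal manipulation of the $F$, $V$, $d$ relations.
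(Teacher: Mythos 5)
Your proof is correct and takes essentially the same route as the paper's: both deduce $da = pF^{r+1}(dx) \in pM$, invoke saturation to write $a = Fa_0$, and then cancel the injective $F$ from $Fx = F(V^r a_0 + dV^{r+1}b)$. Your ``stripping'' of the $r$ copies of $V$ via the cancellation principle is just an unrolled version of the paper's one-step application of $F^r d$ to the hypothesis (using $F^r dV^r = d$), so the two arguments differ only in bookkeeping.
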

\begin{proof}
First note that we may assume without loss of generality that $M^*$ is saturated, as the canonical map $M^* \to \Sat(M^*)$ is compatible with $F$.  Now suppose $M^*$ is saturated and we are given such an $x \in M^n$.  Then we have $Fx = V^r y + dV^r z$ for some $y$ and $z$.  Applying $F^r d$ to both sides yields $F^r dFx = F^r dV^r y = dy$.  The left side is divisible by $p$ thanks to the identity $dF = pFd$.  By the definition of saturatedness, this implies that $y$ is in the image of $F$; say $y = Fw$.  Then we have
$$
Fx = V^r F w + dV^r z = F V^r w + F d V^{r+1} z,
$$
and since $F$ is injective, it follows that $x = V^r w + dV^r(Vz)$.
\end{proof}

\begin{remark}
We caution the reader that the converse of Lemma \ref{lem:F_inj_ill-defined} is not generally true, because (taking $M^*$ saturated) $F(dV^r x) = dV^{r-1} x$ does not generally lie in $\im V^r  + \im dV^r$.  In other words, $F$ does not generally induce a well-defined map $\W_r M^n \to \W_r M^n$.  If it did, the lemma would say that this map is injective.  Indeed, \cite[Lemma 3.6.1]{BLM} can be interpreted in this light:  when $A^*$ is a saturated Dieudonn\'e algebra, $F$ \emph{does} induce a well-defined map $\W_1 A^0 \to \W_1 A^0$ (because $A^{-1} = 0$); this map is just the Frobenius endomorphism of the $\F_p$-algebra $\W_1 A^0$, and it is injective by the lemma above.
\end{remark}

\begin{proposition} \label{prop:W1_suffices}
Suppose $M$ is a saturated Dieudonn\'e complex, and fix $n \in \Z$.  If $\W_1 M^n = 0$, then $\W M^n = 0$.
\end{proposition}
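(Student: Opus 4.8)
The plan is to prove the stronger statement that \emph{every} truncation $\W_r M^n$ vanishes, which suffices since $\W M^n$ is the inverse limit of the $\W_r M^n$. Write $N_r := V^r M^n + dV^r M^{n-1} \subseteq M^n$, so that $\W_r M^n = M^n/N_r$ and $N_{r+1} \subseteq N_r$, and argue by induction on $r$ that $N_r = M^n$. The base case $r = 1$ is precisely the hypothesis $\W_1 M^n = 0$. For the inductive step, since $N_{r+1} \subseteq N_r = M^n$ automatically, it is enough to establish the reverse inclusion $N_r \subseteq N_{r+1}$, i.e. that both $V^r M^n$ and $dV^r M^{n-1}$ land in $N_{r+1}$.

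The $V^r$-part is easy and uses only the base hypothesis: from $M^n = V M^n + dV M^{n-1}$ together with the standard identity $V^r dV = p^r dV^{r+1}$, one gets $V^r M^n = V^{r+1} M^n + p^r dV^{r+1} M^{n-1} \subseteq N_{r+1}$. The step I expect to be the crux is controlling the term $dV^r M^{n-1}$: a priori this is governed by what happens in degree $n-1$, where no vanishing has been assumed, so there is no direct reason for it to shrink. The key will be to feed it through the Frobenius and invoke Lemma \ref{lem:F_inj_ill-defined}, whose content is exactly that one may descend membership in $\im V^s + \im dV^s$ along $F$.

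Concretely, fix $b \in M^{n-1}$. Using the inductive hypothesis $N_r = M^n$ applied to $db \in M^n$, write $db = V^r u + dV^r w$, and set $e = b - V^r w$, so that $de = V^r u \in V^r M^n \subseteq N_{r+1}$ by the previous paragraph. Modulo $N_{r+1}$ we have $dV^r b \equiv dV^r e$, because $dV^r(V^r w) = dV^{2r} w \in dV^{r+1} M^{n-1} \subseteq N_{r+1}$. Finally, the identity $F^r dV^r = d$ gives $F^r(dV^r e) = de \in N_{r+1}$; since Lemma \ref{lem:F_inj_ill-defined} (with parameter $r+1$) lets us pass from $F x \in N_{r+1}$ to $x \in N_{r+1}$, applying it successively to $F^{r-1}(dV^r e), F^{r-2}(dV^r e), \dots, dV^r e$ promotes this to $dV^r e \in N_{r+1}$, whence $dV^r b \in N_{r+1}$. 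As $b$ was arbitrary this gives $dV^r M^{n-1} \subseteq N_{r+1}$, closing the induction and showing $\W_r M^n = 0$ for all $r$, hence $\W M^n = 0$.
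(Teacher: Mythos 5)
Your proof is correct, but it takes a genuinely different route from the paper's. The paper invokes the isomorphism $\W_r M^n \simeq H^n(M^*/p^r M^*)$ of \cite[Proposition 2.7.1]{BLM} and then runs a d\'evissage: the long exact sequence attached to $0 \to M^*/pM^* \xrightarrow{p^r} M^*/p^{r+1}M^* \to M^*/p^rM^* \to 0$ (exact because $M^*$ is $p$-torsion-free) shows inductively that $H^n(M^*/p^rM^*) = 0$ for all $r$, whence all truncations vanish. You instead never leave the world of $F$, $V$, $d$: you induct directly on the defining filtration $N_r = \im V^r + \im dV^r$, dispatch the term $V^r M^n$ with the identity $V^r dV = p^r dV^{r+1}$ applied to the base-case decomposition $M^n = VM^n + dVM^{n-1}$, and handle the genuinely delicate term $dV^r M^{n-1}$ --- where no vanishing in degree $n-1$ is available --- by Frobenius descent via Lemma \ref{lem:F_inj_ill-defined}, after the correction $e = b - V^r w$ arranges $F^r(dV^r e) = de \in N_{r+1}$. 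I checked the details and they hold: $Vd = pdV$ and $F^r dV^r = d$ are valid in any saturated Dieudonn\'e complex, $dV^{2r}w \in dV^{r+1}M^{n-1}$ since $r \geq 1$, and the lemma with parameter $r+1$ correctly descends membership in $N_{r+1}$ through each of the $r$ applications of $F$. As for what each approach buys: the paper's argument is shorter given the BLM machinery and situates the statement in its natural homological context, while yours is elementary and self-contained modulo Lemma \ref{lem:F_inj_ill-defined} (which the paper proves anyway, though it uses it elsewhere rather than here), and it makes explicit exactly how the degree-$(n-1)$ contribution is controlled; both arguments in fact establish the stronger statement that $\W_r M^n = 0$ for every $r$ before passing to the inverse limit.
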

\begin{proof}
Recall from \cite[Proposition 2.7.1]{BLM} that for any $r$ and $n$, we have an isomorphism $\W_r M^n \simeq H^n(M^*/p^r M^*)$ induced by $F^r$.
By considering the long exact sequence associated to the short exact sequence of complexes
$$
0 \to M^*/pM^* \overset{p^r}{\to} M^*/p^{r+1} M^* \to M^*/p^r M^* \to 0,
$$
we see that the vanishing of $H^n(M^*/pM^*)$ and $H^n(M^*/p^r M^*)$ implies that of $H^n(M^*/p^{r+1} M^*)$.  Thus, if $\W_1 M^n \simeq H^n(M^*/pM^*) = 0$, it follows by induction that $\W_r M^n \simeq H^n(M^*/p^r M^*) = 0$ for all $r$, and thus $\W M^n = 0$.
\end{proof}

\section{The case of hypersurfaces}

\begin{para}
Throughout this section, we fix a finite-type $k$-algebra $R$ equipped with a presentation $R = k[x_1, \dots, x_n]/I$.  We would like to show that if the ideal $I$ is nonzero, then the degree-$n$ component of $\W\Omega^*_R$ vanishes.  For convenience, we will assume (without loss of generality) that $R$ is reduced.
\end{para}

\begin{para} \label{para:dRW_and_dR_W_setup}
Recall from \cite[Proposition 4.1.4]{BLM} that $\W\Omega^*_R$ may be constructed as the strictification of the (naive) de Rham complex $\Omega^*_{W(R)}$.  This gives rise to a commutative diagram
\[
\begin{tikzcd}
{\Omega^*_{W(k[x_1, \dots, x_n])}} \arrow[r, two heads] \arrow[d, two heads] & \Omega^*_{W(R)} \arrow[r, "\rho"] \arrow[d, two heads] & \W\Omega^*_R \arrow[d, two heads] \\
{\Omega^*_{k[x_1, \dots, x_n]}} \arrow[r, two heads]                         & \Omega^*_R \arrow[r]                                   & \W_1\Omega^*_R                   
\end{tikzcd}
\]
of commutative differential graded algebras, where the top row is a diagram of Dieudonn\'e algebras and $\rho$ is the strictification map.  Note that $\Omega^n_{k[x_1, \dots, x_n]}$ is a free $k[x_1, \dots, x_n]$-module of rank $1$ with generator $dx_1 \wedge \cdots \wedge dx_n$.  Our main goal is to prove the following proposition:
\end{para}

\begin{proposition} \label{prop:omega_vanishes}
If $R = k[x_1, \dots, x_n]/I$ as above with $I \neq 0$, then the element
$$\omega := dx_1 \wedge \cdots \wedge dx_n \in \W_1 \Omega^n_R$$
vanishes.
\end{proposition}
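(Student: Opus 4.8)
The plan is to reduce to a single relation and then localize the difficulty into one Frobenius-flavored computation. First I would record two easy reductions. Since $\W\Omega^*_R$ is insensitive to nilpotents and the construction is functorial in the $k$-algebra, I may replace $I$ by the radical of any nonzero principal subideal: choosing $0 \neq f \in I$ gives a surjection $k[x_1,\dots,x_n]/(f) \twoheadrightarrow R$ carrying the generator $dx_1 \wedge \cdots \wedge dx_n$ to $\omega$, so it suffices to treat the hypersurface case $I = (f)$, and (reducing mod nilpotents, using that $k[x_1,\dots,x_n]$ is a UFD) I may take $f$ squarefree, hence not a $p$-th power, so that $\partial_i f \neq 0$ for some $i$. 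It is worth framing the target precisely: unwinding the definition of $\W_1$ for the saturated (strict) complex $\W\Omega^*_R$, the statement $\omega = 0$ in $\W_1\Omega^n_R$ means exactly that the lift $\omega = d[x_1]\wedge\cdots\wedge d[x_n] \in \W\Omega^n_R$ lies in $\im V + \im dV$. Note that $\omega = d\big([x_1]\,d[x_2]\wedge\cdots\wedge d[x_n]\big)$ is \emph{trivially} a boundary; the entire content of the proposition is that one can choose a primitive lying in $\im V$ up to forms with $V$-divisible differential. (As motivation, on the smooth locus $\omega$ already vanishes for dimension reasons, since $\dim R < n$, so all the difficulty is concentrated at the singular points.)

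Next I would lift the de Rham relation $df = 0$ to the level of Witt vectors. Because $f(x) = 0$ in $R$, evaluating $f$ at the Teichm\"uller representatives gives $f([x_1],\dots,[x_n]) \in \ker(W(R) \to R) = V W(R)$, say $f([x_1],\dots,[x_n]) = Vg$. Differentiating and applying the Leibniz rule yields the key identity
\[
\sum_{i=1}^n D_i\, d[x_i] = dVg, \qquad D_i \equiv \partial_i f \pmod{V},
\]
which is the de Rham–Witt shadow of $df = 0$. Wedging this against $\eta_j := \bigwedge_{i \neq j} d[x_i]$ kills all terms but the $j$-th, giving $\pm D_j\,\omega = dVg \wedge \eta_j = d(Vg \cdot \eta_j)$ since $d\eta_j = 0$; and by the projection formula $Vg \cdot \eta_j = V(g\, F\eta_j)$, where $F\eta_j = \prod_{i\neq j}[x_i]^{p-1}d[x_i]$ is well defined. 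Hence $D_j\,\omega \in \im dV$ for every $j$, so $\omega$ is annihilated by the Jacobian ideal in $\W_1\Omega^n_R$.

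The crux — and what I expect to be the main obstacle — is that Jacobian-ideal annihilation is \emph{exactly} the information available already at the de Rham level, and it is not enough: $\Omega^n_R$ is nonzero at singular points, so no amount of wedging the lifted relation can finish the job. The improvement must genuinely exploit the Frobenius $F$ (equivalently, the inverse Cartier operator) that is present in the saturated theory but absent from the naive de Rham complex. The relevant extra leverage is the relation $F\omega = [x_1 \cdots x_n]^{p-1}\,\omega$, coming from $F\,d[x_i] = [x_i]^{p-1}d[x_i]$, together with the saturatedness constraints (the complex is $p$-torsion-free and $\im F = \{x : dx \in pM\}$, the content of Lemma~\ref{lem:F_inj_ill-defined}). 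I would expand $[x_i]^{p-1} = \big(1 + ([x_i]-1)\big)^{p-1}$, feed in the relations $([x_i]-1)\,\omega \in \im dV$ and the Frobenius-twisted relations obtained by applying $F$ to the identity above and re-wedging, and then use $p$-torsion-freeness to "divide by $p$" and recognize the resulting cross terms as elements of $\im V + \im dV$.

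I anticipate that making this bookkeeping converge will require a further normalization — after a coordinate change, taking $f$ monic in $x_n$ by Weierstrass preparation, so that a power of $[x_n]$ can be eliminated against $Vg$ — and an induction on the power of $p$ appearing in the denominators. It may be cleaner to phrase the endgame via the identification $\W_1\Omega^n_R \simeq H^n(\W\Omega^*_R / p\,\W\Omega^*_R)$ of \cite[Proposition 2.7.1]{BLM}, under which $\omega$ corresponds to the class of $F\omega = [x_1\cdots x_n]^{p-1}\omega$, reducing the whole problem to showing that this element is a boundary modulo $p$; the hands-on calculation would then exhibit such a boundary explicitly using $Vg$. Either way, the step from "killed by the Jacobian ideal" to "zero" is the genuine heart of the argument, and is where the saturated theory departs decisively from de Rham cohomology.
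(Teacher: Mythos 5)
Your setup is sound and your diagnosis is largely correct: the reduction to a squarefree hypersurface is legitimate, the computation that the Jacobian ideal kills $\omega$ (via $f([x_1],\dots,[x_n]) = Vg \in W(R)$, wedging against $\eta_j$, and the projection formula) is valid, and you rightly observe both that this information is already available at the de Rham level and that the decisive extra input must be Lemma~\ref{lem:F_inj_ill-defined} together with $F\,d[x_i] = [x_i]^{p-1}\,d[x_i]$. But the proof stops there: the passage from ``killed by the Jacobian ideal'' to ``zero'' --- which you yourself flag as the heart of the matter --- is never carried out, and the concrete steps you sketch for it do not hold up. The claim $([x_i]-1)\,\omega \in \im dV$ is unjustified (nothing you derived yields it, and there is no reason to expect it), and the binomial expansion of $[x_i]^{p-1}$, the Weierstrass normalization, and the induction on powers of $p$ are aspirations rather than arguments. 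In particular, your plan to exhibit an explicit primitive for $F\omega$ modulo $p$ using $Vg$ is likely a dead end: the eventual proof is naturally indirect, and no explicit boundary is in evidence.

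The missing idea is to use Lemma~\ref{lem:F_inj_ill-defined} not to manipulate $\omega$ itself but to establish closure properties of the annihilator ideal $\Ann(\omega) \subseteq k[x_1,\dots,x_n]$ (Proposition~\ref{prop:radical_and_partials}). If $g^p \omega = 0$, then multiplying by $(x_1 \cdots x_n)^{p-1}$ exhibits the lifted relation as $F([g]\, d[x_1] \wedge \cdots \wedge d[x_n]) \in \im V + \im dV$ (Lemma~\ref{lem:F_mult_lift}), so Lemma~\ref{lem:F_inj_ill-defined} gives $g\omega = 0$: the annihilator is \emph{radical}. Similarly, if $g\omega = 0$, then $\bigl(g \prod_{j \neq i} x_j\bigr)^{p-1} g_{x_i}\, \omega = 0$, and this lifts to $F(d[x_1] \wedge \cdots \wedge d[g] \wedge \cdots \wedge d[x_n])$, so Lemma~\ref{lem:F_inj_ill-defined} gives $g_{x_i}\omega = 0$: the annihilator is \emph{closed under partial derivatives} (your Jacobian computation is essentially the special case of elements of $I$). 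Since $\Ann(\omega) \supseteq I$ is nonzero, a nonzero element $g$ of minimal total degree must be constant: otherwise either $g$ is a $p$-th power, and radicality produces $g^{1/p} \in \Ann(\omega)$ of smaller degree, or some $g_{x_i} \neq 0$ lies in $\Ann(\omega)$ with smaller degree. Hence $1 \in \Ann(\omega)$ and $\omega = 0$. So your toolkit (Teichm\"uller lifts, the $F$-twisted relations, saturatedness via Lemma~\ref{lem:F_inj_ill-defined}) is exactly the right one, but the ``divide by $p$ and recognize cross terms'' endgame should be replaced by this ideal-theoretic bootstrap, which closes the gap in a few lines.
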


\begin{remark}
A priori, Proposition \ref{prop:omega_vanishes} is slightly weaker than the vanishing of $\W_1 \Omega^n_R$ (or $\W\Omega^n_R$), since the canonical map $\Omega^*_R \to \W_1 \Omega^*_R$ is not generally surjective.
It turns out that this is not a serious obstacle, thanks to the following lemma.
Thus, although it is Proposition \ref{prop:omega_vanishes} that will actually be used in the next section, the reader may rest assured that this proposition does in fact imply a vanishing result for $\W\Omega^*_R$.
\end{remark}

\begin{lemma} \label{lem:vanishing_map_suffices}
Suppose $R$ is any $\F_p$-algebra and $n$ is an integer such that the natural map $\Omega^n_R \to \W_1 \Omega^n_R$ vanishes identically.  Then $\W\Omega^n_R = 0$.
\end{lemma}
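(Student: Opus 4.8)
The plan is to reduce everything to Proposition \ref{prop:W1_suffices} and Lemma \ref{lem:F_inj_ill-defined}, working not with $\W\Omega^*_R$ directly but with the saturation out of which it is built. Set $N^* := \Omega^*_{W(R)}$ and $M^* := \Sat(N^*)$, so that by \ref{para:dRW_and_dR_W_setup} the complex $\W\Omega^*_R$ is the strictification $\W M^*$. Since the final $p$-adic completion does not alter the quotients $\W_r$, I would identify $\W_1\Omega^n_R \cong \W_1 M^n = M^n/(\im V + \im dV)$, and apply Proposition \ref{prop:W1_suffices} to the saturated Dieudonné complex $M^*$: it reduces the desired vanishing $\W\Omega^n_R = 0$ to the statement $\W_1 M^n = 0$, i.e. $M^n = \im V + \im dV$. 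The real difficulty is that the hypothesis only controls the \emph{image} of $\Omega^n_R$, while the map $\Omega^n_R \to \W_1\Omega^n_R$ need not be surjective; the classes not visibly of de Rham origin must be reached by a separate mechanism, which is precisely the Frobenius-peeling afforded by Lemma \ref{lem:F_inj_ill-defined}.

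First I would record the consequence of the hypothesis at the level of $M^*$. The commutative square in \ref{para:dRW_and_dR_W_setup} shows that the composite $\Omega^n_{W(R)} \xrightarrow{\rho} \W\Omega^n_R \to \W_1\Omega^n_R$ factors as the reduction $\Omega^n_{W(R)} \to \Omega^n_R$ followed by the natural map $\Omega^n_R \to \W_1\Omega^n_R$, which vanishes identically by assumption. Consequently the image of $N^n = \Omega^n_{W(R)}$ in $M^n$ lies in the kernel of $M^n \to \W_1 M^n$, namely $\im V + \im dV$.

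The crux is then the observation that every element of the saturation $M^* = \Sat(N^*)$ acquires, after enough Frobenius twists, a representative coming from $N^*$: because $\Sat(N^*)$ is formed as the colimit of $N^* \xrightarrow{F} N^* \xrightarrow{F} \cdots$, for each $\xi \in M^n$ there is some $m \geq 0$ with $F^m \xi \in \im(N^n \to M^n) \subseteq \im V + \im dV$. Applying Lemma \ref{lem:F_inj_ill-defined} with its Verschiebung parameter equal to $1$, in turn to $x = F^{m-1}\xi,\ F^{m-2}\xi,\ \dots,\ \xi$, I peel off the Frobenii one at a time and conclude $\xi \in \im V + \im dV$; thus the class of $\xi$ in $\W_1\Omega^n_R$ is zero. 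Since $\xi$ was arbitrary, $\W_1\Omega^n_R = \W_1 M^n = 0$, and Proposition \ref{prop:W1_suffices} yields $\W\Omega^n_R = 0$. The step I expect to demand the most care is making this ``Frobenius-generation'' property rigorous from the décalage description of $\Sat$ in \cite{BLM}, together with the compatibility that $\W_1$ is insensitive to the final completion $M^* \rightsquigarrow \W M^* = \W\Omega^*_R$; once those two bookkeeping points are secured, the remainder is a formal application of Lemma \ref{lem:F_inj_ill-defined} and Proposition \ref{prop:W1_suffices}.
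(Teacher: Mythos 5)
Your proposal is correct and takes essentially the same route as the paper's proof: reduce to $\W_1 \Omega^n_R = 0$ via Proposition \ref{prop:W1_suffices}, identify $\W_1 \Omega^*_R = \W_1 \Sat(\Omega^*_{W(R)})$, invoke the fact that $F^N \xi$ lies in the image of $\Omega^*_{W(R)}$ for $N \gg 0$ (your ``Frobenius-generation'' property, which the paper makes rigorous by citing \cite[Remark 2.3.3]{BLM}), and then peel off the Frobenii by iterated application of Lemma \ref{lem:F_inj_ill-defined} with $r = 1$, using the factorization through $\Omega^*_R$ and the hypothesis to see that $F^N \xi$ dies in $\W_1$. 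The only cosmetic difference is that the paper opens with a harmless ``assume $R$ reduced'' reduction, which your argument does not need to make explicit.
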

\begin{proof}
Assume without loss of generality that $R$ is reduced.  We will prove that $\W_1 \Omega^n_R = 0$; this implies the claim by Proposition \ref{prop:W1_suffices}.  Note that $\W_1 \Omega^*_R = \W_1 \Sat(\Omega^*_{W(R)})$, so it is enough to show that every element of $\Sat(\Omega^*_{W(R)})^n$ vanishes upon passing to $\W_1$.

Let $x \in \Sat(\Omega^*_{W(R)})^n$ be arbitrary.  By \cite[Remark 2.3.3]{BLM}, $F^N x$ lies in the image of $\Omega^*_{W(R)}$ for some sufficiently large $N$.  Since the map $\Omega^*_{W(R)} \to \W_1 \Omega^*_R$ factors through $\Omega^*_R$, it follows from our assumption that $F^N x$ maps to $0$ in $\W_1 \Omega^*_R$.  So Lemma \ref{lem:F_inj_ill-defined} implies that $x$ vanishes in $\W_1 \Omega^*_R$ as well, completing the proof.
\end{proof}

\begin{para} \label{para:annihilator}
To prove Proposition \ref{prop:omega_vanishes}, we will view $\W_1 \Omega^n_R$ as a $k[x_1, \dots, x_n]$-module via the map $\Omega^*_{k[x_1, \dots, x_n]} \to \W_1 \Omega^*_R$, and we will study the ideal of $k[x_1, \dots, x_n]$ which annihilates $\omega$.
Clearly, this annihilator contains $I$, as any $f \in I$ annihilates $\omega$ already in $\Omega^*_R$.
To enlarge the annihilator beyond this, we will use Lemma \ref{lem:F_inj_ill-defined}.  This will require temporarily working with elements of $\W\Omega^*_R$ rather than its quotient $\W_1 \Omega^*_R$.
\end{para}

\begin{lemma} \label{lem:F_mult_lift}
Let $g \in R$, and consider the elements $[g], d[g] \in \Omega^*_{W(R)}$, where $[ \cdot ]$ denotes the multiplicative lift.  We have $F([g]) = [g^p] = [g]^p$ and $F(d[g]) = [g]^{p-1} d[g]$.
\end{lemma}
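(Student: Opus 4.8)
The plan is to reduce both identities to the definition of the Frobenius $F$ on the Dieudonn\'e algebra $\Omega^*_{W(R)}$ appearing in \ref{para:dRW_and_dR_W_setup}. On $\Omega^0_{W(R)} = W(R)$ this $F$ is the Witt-vector Frobenius, which is the Frobenius of the canonical $\delta$-ring structure on $W(R)$; writing $\delta$ for the associated $\delta$-operation, we have $F(a) = a^p + p\,\delta(a)$ for every $a \in W(R)$. The Dieudonn\'e structure extends $F$ to $\Omega^1_{W(R)}$ by the formula $F(da) = a^{p-1}\,da + d(\delta a)$, and then multiplicatively to all of $\Omega^*_{W(R)}$. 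Both formulas in the lemma are obtained by specializing $a = [g]$.

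For the first formula I would invoke only that the Witt-vector Frobenius carries a Teichm\"uller lift to the lift of its $p$-th power, $F([g]) = [g^p]$, together with the multiplicativity $[g^p] = [g]^p$ of the Teichm\"uller map, and the fact that $F$ restricts to the Witt-vector Frobenius on $\Omega^0$. The key input for the second formula is that the Teichm\"uller lift has vanishing $\delta$-invariant: combining $F([g]) = [g]^p$ with the relation $F([g]) = [g]^p + p\,\delta([g])$ gives $p\,\delta([g]) = 0$, and since $W(R)$ is $p$-torsion-free (as $R$ is reduced) this forces $\delta([g]) = 0$. Substituting $a = [g]$ into $F(da) = a^{p-1}\,da + d(\delta a)$ then yields $F(d[g]) = [g]^{p-1}\,d[g] + d(0) = [g]^{p-1}\,d[g]$, as claimed.

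I expect no essential difficulty beyond correctly invoking the construction of $F$. The one point deserving care is that the naive formula $F(da) = \tfrac{1}{p}\,d(Fa)$ cannot be taken at face value, since $\Omega^1_{W(R)}$ need not be $p$-torsion-free; the honest definition is the $\delta$-ring formula $F(da) = a^{p-1}\,da + d(\delta a)$, in which no division occurs, and whose well-definedness as a graded-ring endomorphism of $\Omega^*_{W(R)}$ is the substance of the Dieudonn\'e-algebra structure established in \cite{BLM} (and is where the $p$-torsion-freeness of $W(R)$, equivalently the reducedness of $R$, is used). Granting that input, the two formulas of the lemma are immediate specializations at $a = [g]$, with the vanishing $\delta([g]) = 0$ doing all the work in the second.
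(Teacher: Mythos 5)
Your proof is correct and takes essentially the same route as the paper: the paper also treats $F([g]) = [g^p] = [g]^p$ as a standard Witt-vector fact and derives the second identity from the formula $F(da) = a^{p-1}\,da + d(\delta a)$ of \cite[Proposition 3.2.1]{BLM}. Your additional observations---that $\delta([g]) = 0$ follows from $p$-torsion-freeness of $W(R)$ (valid since $R$ is assumed reduced in this section), and that the naive expression $\tfrac{1}{p}\,d(Fa)$ is not the honest definition---are exactly the details the paper's one-line proof leaves implicit.
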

\begin{proof}
The first claim is a standard fact about multiplicative lifts in $W(R)$, and the second follows from it using the formula of (\cite[Proposition 3.2.1]{BLM}).
\end{proof}

\begin{proposition} \label{prop:radical_and_partials}
The annihilator of $\omega \in \W_1 \Omega^n_R$ is a radical ideal of $k[x_1, \dots, x_n]$ which is closed under taking partial derivatives.
\end{proposition}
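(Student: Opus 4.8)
The plan is to study the ideal $\mathfrak a := \Ann_{k[x_1,\dots,x_n]}(\omega)$ by lifting $\omega$ to the saturated complex $\W\Omega^*_R$ and exploiting the Frobenius, as anticipated in \ref{para:annihilator}. Write $A = k[x_1,\dots,x_n]$, and let $\tilde\omega := d[x_1]\wedge\cdots\wedge d[x_n]\in\W\Omega^n_R$ be the canonical lift of $\omega$ coming from the top row of the diagram in \ref{para:dRW_and_dR_W_setup}. For any $f\in A$ with image $\bar f\in R$, the element $[\bar f]\,\tilde\omega\in\W\Omega^n_R$ reduces to $f\cdot\omega$ in $\W_1\Omega^n_R$, since the Teichm\"uller lift $[\bar f]\in\W\Omega^0_R$ reduces to the image of $f$ in $\W_1\Omega^0_R$ and the projection $\W\Omega^n_R\to\W_1\Omega^n_R$ is linear over it. The one computational input I will use repeatedly is Lemma \ref{lem:F_mult_lift}, $F(d[h]) = [h]^{p-1}d[h]$; since $F$ is a graded ring map, it yields $F\tilde\omega = [(x_1\cdots x_n)^{p-1}]\,\tilde\omega$ and, more generally, shows that $F$ multiplies any wedge of the $d[x_j]$ by the Teichm\"uller lift of the corresponding product of $x_j^{p-1}$. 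The whole argument is then an instance of one template: to show that some $\phi\in A$ lies in $\mathfrak a$, I exhibit a lift $\beta\in\W\Omega^n_R$ of $\phi\omega$ with $F\beta = [c]\,\beta$ for an explicit $c$, check that $c\phi\omega = 0$ in $\W_1\Omega^n_R$ using the hypothesis, and conclude from Lemma \ref{lem:F_inj_ill-defined} (with $r=1$) that $\beta$, hence $\phi\omega$, vanishes.

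That $\mathfrak a$ is an ideal is automatic. For radicality it suffices to prove $g^p\in\mathfrak a\Rightarrow g\in\mathfrak a$: given $g^m\in\mathfrak a$, choosing $N$ with $p^N\ge m$ gives $g^{p^N}\in\mathfrak a$, and $N$ applications yield $g\in\mathfrak a$. So assume $g^p\omega=0$ and take $\beta=[\bar g]\,\tilde\omega$, a lift of $g\omega$. The formulas above give $F\beta = [\bar g^{\,p}]\,F\tilde\omega = [\bar g^{\,p}(x_1\cdots x_n)^{p-1}]\,\tilde\omega$, which reduces in $\W_1\Omega^n_R$ to $(x_1\cdots x_n)^{p-1}(g^p\omega) = 0$. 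Thus $F\beta\in\im V+\im dV$, so Lemma \ref{lem:F_inj_ill-defined} forces $\beta\in\im V+\im dV$; that is, $g\omega = 0$.

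For closure under $\partial/\partial x_i$, suppose $g\omega=0$. Set $\eta_i := d[x_1]\wedge\cdots\widehat{d[x_i]}\cdots\wedge d[x_n]$ and $\theta_i := d([\bar g]\,\eta_i)$. Since each $d[x_j]$ is closed, $d\eta_i=0$, so $\theta_i = d[\bar g]\wedge\eta_i$; reducing to $\W_1$ and expanding $d\bar g = \sum_j (\partial g/\partial x_j)\,dx_j$ shows that $\theta_i$ lifts $\pm(\partial g/\partial x_i)\,\omega$. Applying Lemma \ref{lem:F_mult_lift} factor by factor gives $F\theta_i = ([\bar g]^{p-1}d[\bar g])\wedge[(x_1\cdots x_n/x_i)^{p-1}]\,\eta_i = [\bar g^{\,p-1}(x_1\cdots x_n/x_i)^{p-1}]\,\theta_i$, whose image in $\W_1\Omega^n_R$ is a scalar multiple of $g^{p-1}(\partial g/\partial x_i)\,\omega$. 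As $p\ge 2$, the factor $g^{p-1}$ contains a copy of $g$, so this image is a multiple of $g\omega=0$. Hence $F\theta_i\in\im V+\im dV$, and Lemma \ref{lem:F_inj_ill-defined} again gives $\theta_i\in\im V+\im dV$, i.e.\ $(\partial g/\partial x_i)\,\omega=0$.

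The main obstacle is not any single computation but keeping the bookkeeping straight between $\W\Omega^*_R$ and its quotient $\W_1\Omega^*_R$: the Frobenius is available only on the former, while the annihilator lives on the latter, and everything hinges on the compatibility $[\bar f]\,\tilde\omega\mapsto f\omega$ together with the precise Teichm\"uller Frobenius formulas of Lemma \ref{lem:F_mult_lift}. Once these are pinned down, the role of the hypothesis is merely to absorb the extra factor of $g$ (respectively $g^p$) that the Frobenius manufactures, after which Lemma \ref{lem:F_inj_ill-defined} performs the descent.
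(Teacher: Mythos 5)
Your proof is correct and takes essentially the same route as the paper's: in both cases, Lemma \ref{lem:F_mult_lift} identifies the natural lift of $g^p(x_1\cdots x_n)^{p-1}\omega$ (resp.\ of $\bigl(g\prod_{j\neq i}x_j\bigr)^{p-1}g_{x_i}\omega$) as $F$ applied to a lift of $g\omega$ (resp.\ of $g_{x_i}\omega$), and Lemma \ref{lem:F_inj_ill-defined} then performs the descent to $\W_1\Omega^n_R$. The only differences are cosmetic --- you carry named lifts $\tilde\omega$, $\beta$, $\theta_i$ in $\W\Omega^*_R$ where the paper works with the corresponding elements of $\Omega^*_{W(R)}$, and you spell out the standard reduction of radicality to the single case of $p$-th powers, which the paper leaves implicit.
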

\begin{proof}
To prove $\Ann(\omega)$ is a radical ideal, suppose $g^p \omega = 0$ for some polynomial $g$.  By multiplying, we see that $g^p (x_1 \cdots x_n)^{p-1} \omega$ also vanishes.  In particular, the element
$$
[g]^p ([x_1] \cdots [x_n])^{p-1} d[x_1] \wedge \cdots \wedge d[x_n] \in \Omega^n_{W(R)}
$$
maps to $0$ in $\W_1 \Omega^n_R$.  But this element equals
$$
F([g] d[x_1] \wedge \cdots \wedge d[x_n])
$$
by Lemma \ref{lem:F_mult_lift} and the multiplicativity of $F$.  Thus, by applying Lemma \ref{lem:F_inj_ill-defined} and passing back down to $\W_1 \Omega^n_R$, we can conclude that $g \omega$ vanishes.

As for closure under partial derivatives, suppose $g \omega = 0$ for some polynomial $g$, and let $g_{x_i}$ be its partial derivative for some index $1 \leq i \leq n$.  Then the element
$$
\left( g \prod_{j \neq i} x_j \right)^{p-1} g_{x_i} \cdot \omega = \left( g \prod_{j \neq i} x_j \right)^{p-1} dx_1 \wedge \cdots \wedge dg \wedge \cdots \wedge dx_n
$$
also vanishes in $\W_1 \Omega^n_R$, where we have inserted $dg$ in place of $dx_i$.  As before, we can lift this to the element
\begin{align*}
& \left( [g] \prod_{j \neq i} [x_j] \right)^{p-1} d[x_1] \wedge \cdots \wedge d[g] \wedge \cdots \wedge d[x_n] \\
& = F \left( d[x_1] \wedge \cdots \wedge d[g] \wedge \cdots \wedge d[x_n] \right) \in \Omega^n_{W(R)},
\end{align*}
so Lemma \ref{lem:F_inj_ill-defined} implies that the element
$$
dx_1 \wedge \cdots \wedge dg \wedge \cdots \wedge dx_n = g_{x_i} \omega \in \W_1 \Omega^n_R
$$
vanishes as well.
\end{proof}

\begin{proof}[Proof of Proposition \ref{prop:omega_vanishes}.]
By \ref{para:annihilator}, the annihilator of $\omega$ is an ideal of $k[x_1, \dots, x_n]$ which contains $I$, and which is therefore nonzero.  Choose a nonzero element $g$ of $\Ann(\omega)$ of minimal total degree, and suppose that $g$ is not constant.  If $g$ is a $p$-th power, then Proposition \ref{prop:radical_and_partials} implies that $g^{1/p}$ is a nonconstant element of $\Ann(\omega)$ of smaller total degree, a contradiction.  Otherwise, $g$ has at least one nonzero partial derivative, so again Proposition \ref{prop:radical_and_partials} leads to a contradiction.  Thus $g$ must be a constant, so $\omega = 0$ in $\W_1 \Omega^n_R$.
\end{proof}

\section{The general case}

In this section, we will bootstrap up from the case of hypersurfaces to prove our main results in full generality.  We begin with the following generalization of Proposition \ref{prop:omega_vanishes}:

\begin{proposition} \label{prop:omega_vanishes_general}
Suppose $R$ is a Noetherian $k$-algebra, and $n \in \N$.
If $n > \sup_{\eta}(\trdeg(\kappa(\eta)/k))$ (where $\eta$ runs over the generic points of the irreducible components of $\Spec R$), then the natural map $\Omega^n_R \to \W_1 \Omega^n_R$ vanishes identically.
\end{proposition}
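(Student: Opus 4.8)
The plan is to reduce to the hypersurface case already handled by Proposition \ref{prop:omega_vanishes}, using the functoriality of $\W_1\Omega^*$ together with a transcendence-degree argument that forces the relevant defining ideal to be nonzero. The only genuinely substantive point will be this last ideal-nonvanishing claim; everything else is formal.

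First I would reduce to checking the vanishing on natural module generators. Since $\Omega^1_R$ is generated as an $R$-module by the exact forms $df$ with $f \in R$, the module $\Omega^n_R$ is generated by the wedges $df_1 \wedge \cdots \wedge df_n$. The map $\Omega^*_R \to \W_1\Omega^*_R$ is a map of commutative differential graded algebras (the bottom row of the diagram in \ref{para:dRW_and_dR_W_setup}), so in particular it is compatible with the degree-$0$ ring map $R \to \W_1\Omega^0_R$ and is $R$-linear for the resulting module structure on $\W_1\Omega^n_R$. Hence it suffices to show that each generator $df_1 \wedge \cdots \wedge df_n$ maps to $0$ in $\W_1\Omega^n_R$; any coefficient $f_0$ is then carried along by the module structure.

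Next, fixing $f_1, \dots, f_n \in R$, I would let $S = k[f_1, \dots, f_n] \subseteq R$ be the $k$-subalgebra they generate and write $S = k[x_1, \dots, x_n]/I$ via $x_i \mapsto f_i$. Functoriality of $\W_1\Omega^*$ applied to the inclusion $S \hookrightarrow R$ yields a commutative square relating $\Omega^n_S \to \W_1\Omega^n_S$ to $\Omega^n_R \to \W_1\Omega^n_R$, under which $dx_1 \wedge \cdots \wedge dx_n \in \Omega^n_S$ maps to $df_1 \wedge \cdots \wedge df_n \in \Omega^n_R$. By Proposition \ref{prop:omega_vanishes}, the top horizontal map kills $dx_1 \wedge \cdots \wedge dx_n$ as soon as $I \neq 0$, and commutativity of the square then gives the desired vanishing in $\W_1\Omega^n_R$. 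So the whole statement comes down to the claim that $I \neq 0$, i.e.\ that any $n$ elements $f_1, \dots, f_n \in R$ are algebraically dependent over $k$.

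This claim is the crux, and it is where the transcendence hypothesis enters. Suppose toward a contradiction that $f_1, \dots, f_n$ are algebraically independent, so that $S \cong k[x_1, \dots, x_n]$ is a polynomial ring, a domain of transcendence degree $n$ over $k$. Using that $R$ is Noetherian, so that its nilradical is the intersection of finitely many minimal primes $\mathfrak p_1, \dots, \mathfrak p_m$, I would set $\mathfrak q_i := \mathfrak p_i \cap S$. Each $\mathfrak q_i$ is prime in $S$, and $\bigcap_i \mathfrak q_i = S \cap \mathrm{nil}(R) = 0$, since an element of the reduced ring $S$ that is nilpotent in $R$ is already $0$. As $(0)$ is prime in the domain $S$ and $\prod_i \mathfrak q_i \subseteq \bigcap_i \mathfrak q_i = (0)$, some $\mathfrak q_i = (0)$; that is, $S$ injects into $R/\mathfrak p_i$. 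Then $\kappa(\eta_i) = \Frac(R/\mathfrak p_i)$ contains $\Frac(S)$ and hence has transcendence degree $\geq n$ over $k$, contradicting $n > \sup_\eta \trdeg(\kappa(\eta)/k)$. This forces $I \neq 0$ and completes the reduction. The main obstacle is precisely this extraction of a single minimal prime of $R$ over which the $f_i$ remain independent; the generator reduction and the functoriality square are routine.
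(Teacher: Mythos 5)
Your proof is correct and takes essentially the same route as the paper's: reduce to the module generators $df_1 \wedge \cdots \wedge df_n$, show the map $k[x_1, \dots, x_n] \to R$ has nonzero kernel using the transcendence bound at the finitely many generic points, and conclude by functoriality from Proposition \ref{prop:omega_vanishes}. The only cosmetic differences are that the paper exhibits an explicit nonzero kernel element (a product of polynomials vanishing in each $\kappa(\eta_i)$, after first reducing $R$) where you argue by contradiction via contractions of the minimal primes, and that you factor through the full image $k[x_1, \dots, x_n]/I$ rather than a single hypersurface $k[t_1, \dots, t_n]/(g)$.
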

\begin{proof}
Assume without loss of generality that $R$ is reduced.  It suffices to prove that for every $n$-tuple of elements $x_1, \dots, x_n \in R$, the element $dx_1 \wedge \cdots \wedge dx_n \in \Omega^n_R$ maps to $0 \in \W_1 \Omega^n_R$.  Accordingly, fix $x_1, \dots, x_n$, and let $\pi \colon k[t_1, \dots, t_n] \to R$ be the $k$-algebra homomorphism determined by $t_i \mapsto x_i$ for each $i$.  We first argue that $\pi$ is not injective.  To see this, first note that $\Spec R$ has finitely many irreducible components by the Noetherian hypothesis, and by the transcendence degree assumption, each of the induced maps $k[t_1, \dots, t_n] \to \kappa(\eta_i)$ is non-injective.  Thus, by multiplying together nonzero polynomials which vanish in each of the various fraction fields, we can find a nonzero polynomial $g \in k[t_1, \dots, t_n]$ which vanishes identically on $R$; that is, $g \in \ker \pi$.

Then $\pi$ factors through $k[t_1, \dots, t_n]/(g)$, so we have a commutative diagram of commutative differential graded algebras
\[
\begin{tikzcd}
{\Omega^*_{k[t_1, \dots, t_n]}} \arrow[r] \arrow[d] & {\Omega^*_{k[t_1, \dots, t_n]/(g)}} \arrow[r] \arrow[d] & \Omega^*_R \arrow[d] \\
{\W_1 \Omega^*_{k[t_1, \dots, t_n]}} \arrow[r]      & {\W_1 \Omega^*_{k[t_1, \dots, t_n]/(g)}} \arrow[r]      & \W_1 \Omega^*_R.
\end{tikzcd}
\]
By Proposition \ref{prop:omega_vanishes}, the element $dt_1 \wedge \cdots \wedge dt_n \in \Omega^n_{k[t_1, \dots, t_n]}$ vanishes upon passage to $\W_1 \Omega^*_{k[t_1, \dots, t_n]/(g)}$.
Since the same element maps to $dx_1 \wedge \cdots \wedge dx_n \in \Omega^n_R$, it follows that this element vanishes in $\W_1 \Omega^*_R$, as claimed.
\end{proof}

We are now ready to prove our main results.

\begin{theorem} \label{thm:noetherian_dimensional_vanishing}
Suppose $R$ is a Noetherian $k$-algebra.  The complex $\W\Omega^*_R$ is concentrated in degrees $[0, \sup_{\eta}(\trdeg(\kappa(\eta)/k))]$, where $\eta$ runs over the generic points of $\Spec R$ as before.
\end{theorem}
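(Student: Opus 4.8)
The plan is to deduce this directly from the two key preliminary results, Proposition \ref{prop:omega_vanishes_general} and Lemma \ref{lem:vanishing_map_suffices}, since the substantive work has already been carried out. Write $d = \sup_{\eta}(\trdeg(\kappa(\eta)/k))$ for the claimed upper bound. Because $\W\Omega^*_R$ is by construction concentrated in non-negative cohomological degrees, it vanishes automatically for $n < 0$, so it suffices to prove that $\W\Omega^n_R = 0$ for every integer $n > d$.

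Accordingly, I would fix such an $n$ and first invoke Proposition \ref{prop:omega_vanishes_general}, whose hypothesis $n > \sup_{\eta}(\trdeg(\kappa(\eta)/k))$ is precisely our assumption. This tells us that the natural map $\Omega^n_R \to \W_1 \Omega^n_R$ vanishes identically. Since $k$ has characteristic $p$, the ring $R$ is an $\F_p$-algebra, so Lemma \ref{lem:vanishing_map_suffices} applies and upgrades this to the conclusion $\W\Omega^n_R = 0$. As $n > d$ was arbitrary and $\W\Omega^*_R$ already vanishes in negative degrees, we obtain that $\W\Omega^*_R$ is concentrated in degrees $[0, d]$, as desired.

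There is no serious obstacle remaining at this stage: the theorem is a formal consequence of the hypersurface calculation (Proposition \ref{prop:omega_vanishes}), its transcendence-degree generalization (Proposition \ref{prop:omega_vanishes_general}), and the bootstrap from $\W_1$ to $\W$ supplied by Lemma \ref{lem:vanishing_map_suffices} (which in turn rests on Proposition \ref{prop:W1_suffices}). The one point worth flagging is conceptual rather than technical: Proposition \ref{prop:omega_vanishes_general} controls only the \emph{image} of the canonical map $\Omega^n_R \to \W_1 \Omega^n_R$, and this map need not be surjective. Lemma \ref{lem:vanishing_map_suffices} is exactly what closes this gap, using the saturatedness of the underlying Dieudonné complex together with the injectivity statement of Lemma \ref{lem:F_inj_ill-defined} to promote vanishing of the map to vanishing of its target. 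Having recorded this, the proof amounts to a short chain of citations with no new computation.
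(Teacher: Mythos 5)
Your proof is correct and follows exactly the paper's route: the paper's own proof of this theorem is the one-line combination of Proposition \ref{prop:omega_vanishes_general} with Lemma \ref{lem:vanishing_map_suffices}, which you have simply spelled out in detail (including the automatic vanishing in negative degrees and the observation that the lemma bridges the gap between vanishing of the map $\Omega^n_R \to \W_1 \Omega^n_R$ and vanishing of $\W\Omega^n_R$ itself).
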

\begin{proof}
Combine Proposition \ref{prop:omega_vanishes_general} with Lemma \ref{lem:vanishing_map_suffices}.
\end{proof}

\begin{corollary} \label{cor:dimensional_vanishing_finite-type}
Suppose $R$ is a finite-type $k$-algebra.  Then the complex $\W\Omega^*_R$ is concentrated in degrees $[0, \dim R]$.
\end{corollary}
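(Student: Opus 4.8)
The plan is to deduce this directly from Theorem \ref{thm:noetherian_dimensional_vanishing}, which already furnishes concentration of $\W\Omega^*_R$ in degrees $[0, \sup_\eta \trdeg(\kappa(\eta)/k)]$ for any Noetherian $R$, with $\eta$ ranging over the generic points of $\Spec R$. A finite-type $k$-algebra is in particular Noetherian, so it suffices to verify that for such $R$ the supremum appearing in the theorem equals the Krull dimension $\dim R$; substituting this identity into the theorem then yields the corollary. (Strictly, one only needs the bound $\sup_\eta \trdeg(\kappa(\eta)/k) \leq \dim R$ to conclude concentration in $[0,\dim R]$, but in fact equality holds.)

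First I would unwind the two index sets. The generic points $\eta$ of $\Spec R$ are precisely the minimal primes $\mathfrak p_\eta$ of $R$, and each corresponds to an irreducible component $\Spec(R/\mathfrak p_\eta)$ with residue field $\kappa(\eta) = \Frac(R/\mathfrak p_\eta)$. Since the Krull dimension of any ring is the supremum of the dimensions of its irreducible components, we have the first equality $\dim R = \sup_\eta \dim(R/\mathfrak p_\eta)$.

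Next I would invoke the classical dimension theory of finite-type algebras over a field. For each minimal prime, the quotient $R/\mathfrak p_\eta$ is a finite-type domain over $k$, and Noether normalization gives the second equality $\dim(R/\mathfrak p_\eta) = \trdeg(\Frac(R/\mathfrak p_\eta)/k) = \trdeg(\kappa(\eta)/k)$. Chaining the two equalities produces $\dim R = \sup_\eta \trdeg(\kappa(\eta)/k)$, which is exactly the upper bound supplied by Theorem \ref{thm:noetherian_dimensional_vanishing}.

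There is no genuine obstacle here: the corollary is a formal consequence of the theorem together with the standard identification of Krull dimension with transcendence degree for finite-type algebras over a field. The only point deserving a word of care is matching the index sets, namely that the generic points of $\Spec R$ coincide with the minimal primes and hence with the irreducible components entering the dimension computation; once this is noted, the argument is a one-line substitution.
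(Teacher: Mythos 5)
Your proposal is correct and follows exactly the paper's own argument: reduce to Theorem \ref{thm:noetherian_dimensional_vanishing} via Noetherianness, then identify $\sup_{\eta}(\trdeg(\kappa(\eta)/k))$ with $\dim R$ using the standard dimension theory of finite-type $k$-algebras. The extra detail you supply (minimal primes, Noether normalization) is just a fuller writing-out of the same one-line substitution.
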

\begin{proof}
If $R$ is a finite-type $k$-algebra, then $R$ is Noetherian, and the transcendence degree of each $\kappa(\eta)$ over $k$ is the dimension of the corresponding irreducible component of $\Spec R$.  Thus the upper bound $\sup_{\eta}(\trdeg(\kappa(\eta)/k))$ from Theorem \ref{thm:noetherian_dimensional_vanishing} is equal to $\dim R$.
\end{proof}

Since the saturated de Rham--Witt complex is computed on affines, the corresponding global statements follow immediately:
\begin{corollary} \label{cor:dimensional_vanishing_global}
\begin{enumerate}
\item For any locally Noetherian $k$-scheme $X$, the complex $\W\Omega^*_X$ is concentrated in degrees $[0, \sup_{\eta}(\trdeg(\kappa(\eta)/k))]$, where $\eta$ runs over the generic points of the irreducible components of $X$.
\item If $X/k$ is locally of finite type, then $\W\Omega^*_X$ is concentrated in degrees $[0, \dim X]$.
\end{enumerate}
\end{corollary}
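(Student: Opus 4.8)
The plan is to globalize Theorem \ref{thm:noetherian_dimensional_vanishing} and Corollary \ref{cor:dimensional_vanishing_finite-type} by the standard observation that the assertion is local on $X$. Concretely, since the construction $R \mapsto \W_r \Omega^*_R$ commutes with Zariski localization by \cite[Remark 5.2.3]{BLM}, and hence so does its limit over $r$, each term $\W\Omega^n_X$ is the Zariski sheaf associated to the presheaf $U \mapsto \W\Omega^n_{\mathcal O_X(U)}$. In particular, $\W\Omega^n_X = 0$ as soon as $\W\Omega^n_{\mathcal O_X(U)} = 0$ for every $U$ ranging over a basis of affine opens of $X$.

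For part (1), I would use that $X$ is locally Noetherian to pick an affine open cover by spectra of Noetherian $k$-algebras; their distinguished opens $\Spec R_f$ then furnish an affine basis with each $R_f$ again Noetherian. For any such affine open $U = \Spec R_f$, the generic points of $U$ are precisely the generic points of $X$ that lie in $U$, with the same residue fields: if $Z$ is an irreducible component of $X$ with generic point $\eta \in U$, then $Z \cap U$ is an irreducible component of $U$ with generic point $\eta$, and conversely. Hence $\sup_{\eta \in U}(\trdeg(\kappa(\eta)/k)) \leq d$, where $d := \sup_\eta(\trdeg(\kappa(\eta)/k))$ now runs over all generic points of $X$. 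Applying Theorem \ref{thm:noetherian_dimensional_vanishing} to each $R_f$ then gives $\W\Omega^n_{R_f} = 0$ for every $n > d$, so the presheaf above is identically zero in those degrees and the sheaf $\W\Omega^n_X$ vanishes.

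For part (2), when $X/k$ is locally of finite type I would instead take an affine cover by spectra of finite-type $k$-algebras and invoke Corollary \ref{cor:dimensional_vanishing_finite-type}, using $\dim \mathcal O_X(U) \leq \dim X$. This is just the specialization of part (1) under the identity $\trdeg(\kappa(\eta)/k) = \dim \overline{\{\eta\}}$, which holds for finite-type $k$-algebras and converts the bound $d$ into $\dim X$.

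I do not anticipate a genuine obstacle: the statement is a formal consequence of the affine case together with the sheaf property. The only point needing (minor) care is the bookkeeping verifying that passing to an affine open can only shrink the set of generic points and leaves their residue fields unchanged, so that the global transcendence-degree bound $d$ dominates all the local ones.
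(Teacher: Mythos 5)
Your proposal is correct and follows essentially the same route as the paper, whose entire proof is the remark that the saturated de Rham--Witt complex is computed on affines, so that Theorem \ref{thm:noetherian_dimensional_vanishing} and Corollary \ref{cor:dimensional_vanishing_finite-type} globalize immediately; you have simply made the bookkeeping explicit (correctly, including the point that generic points of an affine open are generic points of $X$ with unchanged residue fields). One minor inaccuracy: inverse limits need not commute with localization, so your inference that $\W\Omega^*$ commutes with Zariski localization ``because each $\W_r\Omega^*$ does'' is not literally justified --- but this is harmless, since $\W\Omega^*_X = \lim_r \W_r\Omega^*_X$ is a limit of sheaves and hence has sections over any affine open $\Spec R$ equal to $\lim_r \W_r\Omega^n_R = \W\Omega^n_R$, which is all your vanishing argument actually requires.
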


As for hypercohomology, we have:

\begin{corollary} \label{cor:dimensional_vanishing_coh}
For any finite-type scheme $X/k$, the hypercohomology $\mathbb H^*(X, \W\Omega^*_X)$ is concentrated in degrees $[0, 2 \dim X]$.  If $X/k$ is moreover affine, then $\mathbb H^*(X, \W\Omega^*_X)$ is concentrated in degrees $[0, \dim X]$.  The same bounds hold for the hypercohomology of the quotient complex $\W_r \Omega^*_X$ for any $r > 0$.
\end{corollary}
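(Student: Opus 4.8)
The plan is to deduce both bounds from the first hypercohomology spectral sequence
$$E_1^{p,q} = H^q(X, \W\Omega^p_X) \Rightarrow \mathbb H^{p+q}(X, \W\Omega^*_X),$$
together with the same spectral sequence for each truncation $\W_r\Omega^*_X$, by controlling the region in which $E_1^{p,q}$ is nonzero. The $p$-direction is handled by Corollary \ref{cor:dimensional_vanishing_global}: the complex $\W\Omega^*_X$ vanishes in degrees $p > \dim X$, and the same holds for $\W_r\Omega^*_X$, since the proof of Proposition \ref{prop:W1_suffices} in fact shows $\W_r M^n = 0$ for \emph{all} $r$ as soon as $\W_1 M^n = 0$. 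Thus in every case $E_1^{p,q} = 0$ for $p > \dim X$.

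For the general bound I would control the $q$-direction by Grothendieck's vanishing theorem: $X$ is a Noetherian topological space of dimension $\dim X$, so $H^q(X, \mathcal F) = 0$ for every sheaf of abelian groups $\mathcal F$ and every $q > \dim X$. Hence $E_1^{p,q}$ is supported in the square $0 \le p,q \le \dim X$, and $\mathbb H^m$ vanishes for $m > 2\dim X$ (and for $m < 0$). This step uses nothing about the de Rham--Witt sheaves beyond their being abelian sheaves, so it applies verbatim to $\W_r\Omega^*_X$.

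The affine bound requires improving the cohomological-dimension input from $\dim X$ to $0$. For $X = \Spec R$ I would first treat each truncation: by the localization compatibility of \cite[Remark 5.2.3]{BLM}, the sheaf $\W_r\Omega^p_X$ is the quasi-coherent sheaf on the affine scheme $\Spec W_r(R)$ associated to the $W_r(R)$-module $\W_r\Omega^p_R$, and $\Spec W_r(R)$ has the same underlying space as $X$ (the restriction map $W_r(R) \to R$ being surjective with nil kernel). Serre's affine vanishing then gives $H^q(X, \W_r\Omega^p_X) = 0$ for $q > 0$, so the spectral sequence collapses onto the row $q = 0$ and $R\Gamma(X, \W_r\Omega^*_X) \simeq \W_r\Omega^*_R$, a complex concentrated in degrees $[0,\dim X]$. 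This already yields the claimed bound for each $\W_r\Omega^*_X$.

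Finally, to obtain the affine bound for $\W\Omega^*_X$ itself, I would pass to the limit via $R\Gamma(X, \W\Omega^*_X) \simeq R\lim_r R\Gamma(X, \W_r\Omega^*_X) \simeq R\lim_r \W_r\Omega^*_R$, using that $R\Gamma$ commutes with $R\lim$. The restriction maps $\W_{r+1}\Omega^p_R \to \W_r\Omega^p_R$ are surjective, so each tower $(\W_r\Omega^p_R)_r$ is Mittag--Leffler and the derived limit reduces to the ordinary limit $\W\Omega^*_R$, still concentrated in degrees $[0,\dim X]$. I expect this last interchange to be the main obstacle: one must verify that no $\lim^1$ contribution raises the cohomological degree, which is precisely what the surjectivity (hence Mittag--Leffler property) of the restriction maps ensures. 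Granting this, $\mathbb H^m(X, \W\Omega^*_X) = 0$ for $m > \dim X$, completing the argument.
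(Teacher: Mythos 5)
Your proposal is correct and follows essentially the same route as the paper: the general bound via the hypercohomology spectral sequence together with Grothendieck's dimensional vanishing and Corollary \ref{cor:dimensional_vanishing_global}, and the affine bound via acyclicity of quasicoherent sheaves on affines applied to Corollary \ref{cor:dimensional_vanishing_finite-type} (extended to each $\W_r$, as you note, by the proof of Proposition \ref{prop:W1_suffices}). Your treatment merely makes explicit two points the paper leaves implicit --- the quasicoherence of $\W_r\Omega^p_X$ over $W_r(\mathcal O_X)$ on the homeomorphic space $\Spec W_r(R)$, and the $R\lim$/Mittag--Leffler argument showing no $\lim^1$ term spoils the affine bound for the limit complex $\W\Omega^*_X$ --- both of which are verified correctly.
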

\begin{proof}
The affine case follows from Corollary \ref{cor:dimensional_vanishing_finite-type}, since quasicoherent sheaves are acyclic on affines.
The general case follows from considering the spectral sequence
$$
E_1^{i,j} = H^j(X, \W\Omega^i_X) \implies \mathbb H^{i+j}(X, \W\Omega^*_X)
$$
and its finite-length analogues, where Corollary \ref{cor:dimensional_vanishing_global} and Grothendieck's dimensional vanishing theorem imply that $E_1^{i,j} = 0$ unless $0 \leq i, j \leq \dim X$.
\end{proof}

Finally, let us state a slight generalization of the above, which gives a good bound even for schemes which are of finite type over imperfect fields such as $k(x_1, \dots, x_n)$.  The proof is completely analogous, using Theorem \ref{thm:noetherian_dimensional_vanishing} instead of its corollaries.

\begin{corollary} \label{cor:dimensional_vanishing_coh_noetherian}
Suppose $k$ is a perfect field and $X/k$ is a Noetherian scheme.
Then the hypercohomology $\mathbb H^*(X, \W\Omega^*_X)$ is concentrated in degrees $[0, \dim X + \sup_{\eta}(\trdeg(\kappa(\eta)/k))]$.  If $X/k$ is moreover affine, then $\mathbb H^*(X, \W\Omega^*_X)$ is concentrated in degrees $[0, \sup_{\eta}(\trdeg(\kappa(\eta)/k))]$.  The same bounds hold for the hypercohomology of the quotient complex $\W_r \Omega^*_X$ for any $r > 0$.
\end{corollary}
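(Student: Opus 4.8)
The plan is to run the argument from the proof of Corollary~\ref{cor:dimensional_vanishing_coh} essentially verbatim, simply feeding in the Noetherian inputs (Theorem~\ref{thm:noetherian_dimensional_vanishing} and Corollary~\ref{cor:dimensional_vanishing_global}(1)) in place of the finite-type ones (Corollaries~\ref{cor:dimensional_vanishing_finite-type} and~\ref{cor:dimensional_vanishing_global}(2)), which only affects the numerical bound. Throughout I would abbreviate $s = \sup_{\eta}(\trdeg(\kappa(\eta)/k))$.

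For the affine case $X = \Spec R$, each $\W\Omega^i_X$ is quasicoherent and hence has no higher cohomology, so the hypercohomology spectral sequence collapses onto the row $j = 0$ and identifies $\mathbb H^*(X, \W\Omega^*_X)$ with the cohomology of the global-sections complex $\W\Omega^*_R$. Theorem~\ref{thm:noetherian_dimensional_vanishing} places the latter in degrees $[0, s]$, giving exactly the asserted affine bound.

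For a general Noetherian $X$, I would invoke the hypercohomology spectral sequence
$$E_1^{i,j} = H^j(X, \W\Omega^i_X) \implies \mathbb H^{i+j}(X, \W\Omega^*_X).$$
Corollary~\ref{cor:dimensional_vanishing_global}(1) forces $\W\Omega^i_X = 0$ for $i > s$, so $E_1^{i,j}$ vanishes outside $0 \leq i \leq s$; and Grothendieck's dimensional vanishing theorem kills $H^j(X, -)$ for $j > \dim X$, so $E_1^{i,j}$ vanishes outside $0 \leq j \leq \dim X$. Hence every $E_1^{i,j}$ of total degree exceeding $s + \dim X$ is zero, and since each $E_\infty^{i,j}$ is a subquotient of $E_1^{i,j}$, the abutment is concentrated in degrees $[0, \dim X + s]$.

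The only point needing care --- and the closest thing to an obstacle --- is the extension to the finite-level complexes $\W_r\Omega^*_X$, since the spectral-sequence argument requires the \emph{finite-level} vanishing $\W_r\Omega^i_X = 0$ for $i > s$, not merely the vanishing of the limit $\W\Omega^i_X$. This is already in hand: the proof of Proposition~\ref{prop:W1_suffices} establishes $\W_r M^n = 0$ for \emph{every} $r$ as soon as $\W_1 M^n = 0$, and the hypothesis $\W_1\Omega^n_R = 0$ for $n > s$ is precisely what Lemma~\ref{lem:vanishing_map_suffices} (via Proposition~\ref{prop:omega_vanishes_general}) provides. With this finite-level vanishing, both the affine collapse and the boxed spectral-sequence estimate go through unchanged with $\W\Omega^*$ replaced by $\W_r\Omega^*$, yielding the same bounds for all $r > 0$.
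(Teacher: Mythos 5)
Your proposal is correct and is exactly the paper's intended argument: the paper gives no separate proof, stating only that one reruns Corollary~\ref{cor:dimensional_vanishing_coh} with Theorem~\ref{thm:noetherian_dimensional_vanishing} and Corollary~\ref{cor:dimensional_vanishing_global}(1) supplying the bound $s = \sup_{\eta}(\trdeg(\kappa(\eta)/k))$, which is what you do. Your explicit justification of the finite-level vanishing $\W_r\Omega^i_X = 0$ for $i > s$ (via Proposition~\ref{prop:W1_suffices} applied after Lemma~\ref{lem:vanishing_map_suffices}) is a point the paper leaves implicit in the phrase ``finite-length analogues,'' and you handle it correctly.
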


\section{Future directions}

\begin{para}
There are several known $p$-adic cohomology theories which enjoy comparisons to crystalline cohomology in the smooth proper case:  crystalline cohomology itself, classical and saturated de Rham--Witt cohomology, rigid cohomology, and so on.  Among these, the only ones which obey dimensional vanishing theorems beyond the smooth case are saturated de Rham--Witt cohomology (Corollary \ref{cor:dimensional_vanishing_coh}) and rigid cohomology (\cite[Proposition 8.2.21]{le_stum}).
This can be seen as evidence for a positive answer to the following question of Illusie (\cite{illusie_talks}), which would be interesting to study in future work.
\end{para}

\begin{question}
If $X/k$ is proper and $K = W(k)[\frac{1}{p}]$, do we have an isomorphism
$$
R\Gamma(X, \W\Omega^*_X) \otimes_{W(k)} K \simeq R\Gamma_{\mathrm{rig}}(X/K)?
$$
That is, is the cohomology of $\W\Omega^*_X$ an integral model for rigid cohomology?
\end{question}

\printbibliography

\end{document}